\newcolumntype{Y}{>{\centering\arraybackslash}X}
\newcommand{\longsquiggly}{\xymatrix{{}\ar@{~>}[r]&{}}}
\newtheorem{thm}{Theorem}[section]
\newtheorem{lemma}[thm]{Lemma}
\newcommand{\BIG}{\bBigg@{2}}
\newcommand{\vast}{\bBigg@{3}}
\newcommand{\Vast}{\bBigg@{5}}
\numberwithin{equation}{section}
\begin{document}


\baselineskip=17pt



\title[$(\sigma,\tau)$-Derivations on Commutative Algebras]{A Note on $\boldsymbol{(\sigma,\tau)}$-Derivations on Commutative Algebras}
\author[D.Chaudhuri]{Dishari Chaudhuri}
\address{Dishari Chaudhuri\\Department of Mathematical Sciences\\ Indian Institute of Science Education and Research Mohali\\
Sector-81, Knowledge City, S.A.S. Nagar, Mohali-140306\\ Punjab, India}
\email{dishari@iitg.ac.in, dishari.chaudhuri@gmail.com}


\begin{abstract}We study universal mapping properties of $(\sigma,\tau)$-derivations over commutative algebras and characterize them over rings of integers of quadratic number fields. As a result we provide extension of some well known results on UFD's of such derivations to certain non-UFD's as well.\\

 \noindent Keywords: $(\sigma,\tau)$-derivations, universal properties, ring of integers.\\

 \noindent $2010$ Mathematics Subject Classification: $16$W$25$, $16$S$10$, $11$R$04$.

\end{abstract}

%

\maketitle

\section{Introduction}
Let $R$ be a commutative ring with $1$ and $\mathcal{A}$ be an algebra over $R$.
Let $\sigma$, $\tau$ be two different algebra endomorphisms on $\mathcal{A}$. A $(\sigma,\tau)$-derivation on $\mathcal{A}$ is an $R$-linear map $\delta: \mathcal{A}\longrightarrow\mathcal{A}$ satisfying $\delta(ab)\;=\; \delta(a)\tau(b)+\sigma(a)\delta(b)$ for $a,b\in \mathcal{A}$. If $x\in\mathcal{A}$, the $(\sigma,\tau)$-derivation $\delta_x$ such that $\delta_{x}(a)=x\tau(a)-\sigma(a)x$, is called a $(\sigma,\tau)$-inner derivation of $\mathcal{A}$ coming from $x$. If $\sigma=\tau=id$, then $\delta$ and $\delta_x$ are respectively the ordinary derivation and inner derivation of $\mathcal{A}$ coming from $x$. For given $\sigma$ and $\tau$ on $\mathcal{A}$, the set of all $(\sigma,\tau)$-derivations on $\mathcal{A}$ is denoted by $\mathfrak{D}_{(\sigma,\tau)}(\mathcal{A})$ and whenever $\mathcal{A}$ is commutative, it carries a natural left module structure given by $(a,\delta)\longmapsto a\cdot\delta:\;x\mapsto a\delta(x).$

The use of $(\sigma,\tau)$-derivations in generalizing Galois theory over division rings were mentioned by Jacobson in \cite{J} (Chapter $7.7$). Later on they have been studied extensively in the case of prime and semiprime rings by many authors. \cite{AAH-06} contains a survey of such studies. An introductory exposure to $(\sigma,\tau)$-derivations can be found in \cite{H-02}. For examples of these kind of twisted derivations on UFD's one can refer to (Table $1$, \cite{ELMS-16} and Table $1$, \cite{H-02}). In the last decade Hartwig, Larsson and Silvestrov gave the study of twisted derivations a new dimension when they generalized Lie algebras to Hom-Lie algebras using $(\sigma,\tau)$-derivations on associative algebras $\mathcal{A}$ over $\mathbb{C}$ in their highly influential paper \cite{HLS-06}. Just as Lie algebras were initially studied as algebras of derivations, hom-Lie algebras have been studied as algebras of twisted derivations. Following this,  twisted derivations and hom-Lie algebras have been an interesting object of study for various different contexts. For example, \cite{G-10}, \cite{JL-08}, \cite{MM-09}, \cite{S-12} to name a few. 
Hartwig et al. mostly studied $(\sigma,\tau)$-derivations on UFD's. An important result (Theorem $4$) in \cite{HLS-06} states that if $\sigma$ and $\tau$ are two different algebra endomorphisms on a unique factorization domain $\mathcal{A}$, then $\mathcal{D}_{(\sigma,\tau)}(\mathcal{A})$ is free of rank one as an $\mathcal{A}$-module with generator $$\Delta:=\frac{\tau-\sigma}{g}:\;x\longmapsto\frac{(\tau-\sigma)(x)}{g},\text{ where }g=\;gcd\;((\tau-\sigma)(\mathcal{A})).$$

  All these results motivated us to investigate similar properties of $(\sigma,\tau)$-derivations in a non unique factorization domain as gcd may not exist for such cases or in algebras in general. It turns out that in quite a few cases $(\sigma,\tau)$-derivations are actually  $(\sigma,\tau)$-inner. We have found that finite dimensional central simple algebras have that property and group rings of finite groups over integral domains with $1$ also share the same property for certain $\sigma$ and $\tau$ (\cite{Chau-19}). Also it is interesting to investigate whether for fixed $\sigma$ and $\tau$, there exists a module of $(\sigma,\tau)$-derivations satisfying universal properties analogous to that of K\"{a}hler modules in case of ordinary derivations. In this note we discuss the universal mapping properties of $(\sigma,\tau)$ derivations on commutative algebras (Theorem \ref{main}). We also characterize them over rings of integers of quadratic number fields (Theorem \ref{O_2}), thus providing examples of such derivations over some non-UFD's as well.
 
 \section{Definition}

We generalize the definition of $(\sigma,\tau)$-derivations on an algebra to $(\sigma,\tau)$-derivations from an algebra to a bimodule over that algebra. In the sequel all algebras considered will be commutative. Let $\mathcal{A}$, $\mathcal{B}$ and $\mathcal{C}$ be algebras over $R$.  Let $\sigma$ be an algebra homomorphism from $\mathcal{A}$ to $\mathcal{B}$ and $\tau$ be an algebra homomorphism from $\mathcal{A}$ to $\mathcal{C}$. Let $M$ be a $\mathcal{B}-\mathcal{C}$-bimodule. $\mathcal{A}$ is naturally an $\mathcal{A}-\mathcal{A}$-bimodule. Now $M$ is an $\mathcal{A}-\mathcal{A}$-bimodule too via $\sigma$ and $\tau$. We will assume  $\mathcal{A}$ acts on $M$ from the left via $\sigma$ and from the right via $\tau$. A $(\sigma,\tau)$-derivation $D$ from $\mathcal{A}$ to $M$ is an $R$-linear map $D:\mathcal{A}\longrightarrow M$ satisfying $D(ab)= D(a)\cdot\tau(b)+\sigma(a)\cdot D(b)$ for $a,b\in \mathcal{A}$.  If $x\in\mathcal{A}$, the $(\sigma,\tau)$-derivation $D_x:\mathcal{A}\rightarrow M$ such that $D_{x}(a)=x\cdot\tau(a)-\sigma(a)\cdot x $, is called a $(\sigma,\tau)$-inner derivation of $\mathcal{A}$ to $M$ coming from $x$. If $\mathcal{B}=\mathcal{C}=\mathcal{A}$ and $\sigma=\tau=id$, then $D$ and $D_x$ are respectively the ordinary derivation and inner derivation from $\mathcal{A}$ to the $\mathcal{A}-\mathcal{A}$-bimodule $M$. For a given $\sigma$ and $\tau$, let us denote the set of all $(\sigma,\tau)$-derivations from $\mathcal{A}$ to $M$ as $\mathfrak{D}_{(\sigma,\tau)}(\mathcal{A}, M)$. Then $\mathfrak{D}_{(\sigma,\tau)}(\mathcal{A},M)$ carries a natural $\mathcal{A}-\mathcal{A}$-bimodule structure by $\big((a_1,a_2),\delta\big)\longmapsto (a_1, a_2)\cdot\delta:\;x\mapsto \sigma(a_1)\cdot\delta(x)\cdot \tau(a_2).$ Also note that $D(1)=0.$

\section{Universal properties of $(\sigma,\tau)$-derivations on commutative algebras}

 We are now in a position to discuss universal mapping properties of $(\sigma,\tau)$-derivations. In the sequel all tensor products will be taken over $R$.

 \begin{thm}\label{main}Let $R$ be a commutative ring with $1$, $\mathcal{A}$ be an algebra over $R$ and $M$ be an $\mathcal{A}-\mathcal{A}$-bimodule. Let $\sigma$, $\tau$ be two different algebra endomorphisms on $\mathcal{A}$ and $D$ be a $(\sigma,\tau)$-derivation from $\mathcal{A}$ to $M$. Let $K_\sigma,\;K_\tau$ and $K_D$ denote the kernels of $\sigma,\;\tau$ and $D$ respectively. Let $\phi_\sigma$ denote the algebra isomorphism from $\mathcal{A}/K_\sigma$ to $\sigma(\mathcal{A})$ and  $\psi_\tau$ denote the algebra isomorphism from $\mathcal{A}/K_\tau$ to $\tau(\mathcal{A})$. Also let $\pi_\tau$ and $\pi_\sigma$ denote the projection homomorphisms from $\mathcal{A}$ to $\mathcal{A}/K_\tau$ and $\mathcal{A}$ to $\mathcal{A}/K_\sigma$ respectively. Then depending on the invertibility of $\sigma$ and $\tau$ the following universal mapping properties of $(\sigma,\tau)$-derivations are satisfied:
 	\begin{enumerate}[$(1)$]
 		\item If $\tau$ is invertible with $\sigma$ may or may not be invertible, then for any $D\in\mathfrak{D}_{(\sigma,\tau)}(\mathcal{A},M)$, there exists an $\mathcal{A}-\mathcal{A}$-bimodule $J$,  $\delta_L\in\mathfrak{D}_{(\sigma,\tau)}(\mathcal{A},J)$ and a unique left $\mathcal{A}$-linear map $f_L:J\longrightarrow M$ such that $D=f_L\circ\delta_L.$
 		\item If $\tau$ is not invertible but $\sigma$ is invertible, then for any $D\in\mathfrak{D}_{(\sigma,\tau)}(\mathcal{A},M)$, there exists an $\mathcal{A}-\mathcal{A}$-bimodule $J$,  $\delta_R\in\mathfrak{D}_{(\sigma,\tau)}(\mathcal{A},J)$ and a unique right $\mathcal{A}$-linear map $f_R:J\longrightarrow M$ such that $D=f_R\circ\delta_R.$
 		\item If neither $\tau$ nor $\sigma$ is invertible, then for any $D\in\mathfrak{D}_{(\sigma,\tau)}(\mathcal{A},M)$ with $K_\tau\subseteq K_D$, there exists an $\mathcal{A}-\mathcal{A}$-bimodule $\bar{J}$,  $\Delta_L\in\mathfrak{D}_{(\sigma,\psi_\tau\circ\pi_\tau)}(\mathcal{A},\bar{J})$ and a unique left $\mathcal{A}$-linear map $g_L:\bar{J}\longrightarrow M$ such that $D=g_L\circ\Delta_L.$
 		\item If neither $\tau$ nor $\sigma$ is invertible, then for any $D\in\mathfrak{D}_{(\sigma,\tau)}(\mathcal{A},M)$ with $K_\sigma\subseteq K_D$, there exists an $\mathcal{A}-\mathcal{A}$-bimodule $\tilde{J}$, $\Delta_R\in\mathfrak{D}_{(\phi_\sigma\circ\pi_\sigma,\tau)}(\mathcal{A},\tilde{J})$ and a unique right $\mathcal{A}$-linear map $g_R:\tilde{J}\longrightarrow M$ such that $D=g_R\circ\Delta_R.$
 	\end{enumerate}

 	\end{thm}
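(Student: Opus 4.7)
The plan is to model the argument on the construction of the module of K\"{a}hler differentials. I will produce $J$ (and the modified versions $\bar{J}$ and $\tilde{J}$) as a universal $\mathcal{A}$-$\mathcal{A}$-bimodule for the relevant class of derivations, via generators and relations: take the free $\mathcal{A}$-$\mathcal{A}$-bimodule on symbols $[a]$ for $a \in \mathcal{A}$, where the left action is via $\sigma$ and the right via $\tau$, and quotient by the $R$-linearity relations and the Leibniz relations $[ab] - \sigma(a)\cdot[b] - [a]\cdot\tau(b)$. Setting $\delta_L(a) = [a]$ exhibits $\delta_L$ as a $(\sigma,\tau)$-derivation, and by construction any $D \in \mathfrak{D}_{(\sigma,\tau)}(\mathcal{A},M)$ factors uniquely through a \emph{bimodule} homomorphism $J \to M$.

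The crux of the theorem is to upgrade this bimodule universal property to a \emph{one-sided} one. For part (1), I rewrite the Leibniz relation as $\delta_L(a)\cdot\tau(b) = \delta_L(ab) - \sigma(a)\cdot\delta_L(b)$; since $\tau$ is bijective, replacing $b$ by $\tau^{-1}(c)$ shows that every right action $\delta_L(a)\cdot c$ can be rewritten as a left-$\mathcal{A}$-combination of elements of $\delta_L(\mathcal{A})$. Hence $\delta_L(\mathcal{A})$ generates $J$ already as a left $\mathcal{A}$-module, so any left $\mathcal{A}$-linear extension is determined by its values on $\delta_L(\mathcal{A})$, giving both the existence and uniqueness of $f_L$. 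Part (2) is the mirror image, using the invertibility of $\sigma$ to rewrite the left action in terms of the right and obtain the unique right $\mathcal{A}$-linear $f_R$.

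For parts (3) and (4), the invertibility needed above fails, but the hypothesis $K_\tau \subseteq K_D$ (respectively $K_\sigma \subseteq K_D$) compensates. The construction of $\bar{J}$ is the same, except that I use the algebra homomorphism $\psi_\tau \circ \pi_\tau$ on the right: this is essentially $\tau$ but presented through the quotient $\mathcal{A}/K_\tau$, on which $\psi_\tau$ is an isomorphism onto $\tau(\mathcal{A})$. The right $\mathcal{A}$-action on $\bar{J}$ thus factors through $\mathcal{A}/K_\tau$, and the hypothesis $K_\tau \subseteq K_D$ is precisely what ensures that $D$ descends compatibly through this quotient so that the universal factorization lands in $M$. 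Once we are working through $\mathcal{A}/K_\tau \xrightarrow{\psi_\tau} \tau(\mathcal{A})$, we may invert $\psi_\tau$ and re-run the rewriting trick of part (1) on the quotient, obtaining one-sided generation of $\bar{J}$ by $\Delta_L(\mathcal{A})$ as a left $\mathcal{A}$-module and hence the unique $g_L$. Part (4) is symmetric, using $\phi_\sigma \circ \pi_\sigma$ on the left.

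The main obstacle will be setting up the bimodule structures and the generating relations in parts (3) and (4) carefully enough that everything is coherent: one must verify that $D$ truly descends to a map compatible with the right (respectively left) action obtained from $\psi_\tau \circ \pi_\tau$ (respectively $\phi_\sigma \circ \pi_\sigma$), which is exactly where the kernel hypotheses are indispensable. Once the correct bimodule is identified, the Leibniz-based generation argument of part (1) transports over with no essential change.
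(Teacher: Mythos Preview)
Your approach is correct and achieves the same conclusion, but it differs from the paper's in how $J$ is constructed and how $f_L$ is produced. The paper realizes $J$ concretely as the two-sided ideal of $\mathcal{A}\otimes_R\mathcal{A}$ generated by the elements $1\otimes\tau(a)-\sigma(a)\otimes 1$, with $\delta_L(a)=1\otimes\tau(a)-\sigma(a)\otimes 1$, and then writes down $f_L$ explicitly as the restriction to $J$ of the map $x\otimes y\mapsto x\,D(\tau^{-1}(y))$ on all of $\mathcal{A}\otimes\mathcal{A}$; the invertibility of $\tau$ enters directly in this formula. In cases (3) and (4) the paper replaces $\mathcal{A}\otimes\mathcal{A}$ by $\mathcal{A}\otimes(\mathcal{A}/K_\tau)$ (respectively $(\mathcal{A}/K_\sigma)\otimes\mathcal{A}$) and uses $\psi_\tau^{-1}$ (respectively $\phi_\sigma^{-1}$) in the analogous explicit formula. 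By contrast, you build $J$ abstractly by generators and relations, obtain a bimodule factorization first, and then use your rewriting trick $\delta_L(a)\cdot\tau(b)=\delta_L(ab)-\sigma(a)\cdot\delta_L(b)$ together with surjectivity of $\tau$ to conclude that $\delta_L(\mathcal{A})$ already generates $J$ as a \emph{left} $\mathcal{A}$-module, whence the one-sided uniqueness. The paper's route has the advantage of an explicit closed formula for $f_L$; your route is more categorical and makes the role of the Leibniz relation in forcing one-sided generation more transparent. Both rely on the identical mechanism---inverting $\tau$ (or $\psi_\tau$) to eliminate the right action---so the arguments are close cousins rather than genuinely independent proofs.
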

 
 \begin{proof}For each case we need to show that the following diagram is commutative:
 	\begin{eqnarray}\label{diagram}
 	\begin{tikzcd}
 	{} \mathcal{A}\arrow{dd}{(\sigma,\tau)}[swap]{D}\arrow{rr}{ \delta_0}[swap]{(\Sigma,T)} & & M_0 \arrow{lldd}{\exists\; !\; f}\\
 	& &\\
 	M & &
 	\end{tikzcd}
 	\end{eqnarray}
 	 where $D$ is a $(\sigma,\tau)$-derivation from $\mathcal{A}$ to $M$, $\delta_0$ is a $(\Sigma,T)$-derivation from $\mathcal{A}$ to $M_0$ and $M_0,\;\Sigma,\;T,\;\delta_0$ and $f$ for each case are as follows:
 	 \begin{enumerate}[C\text{a}se $(1)$:]
 	 	\item $M_0=J$, $\Sigma=\sigma,\;T=\tau,\;\delta_0=\delta_L$ and $f=f_L$,
 	 	\item $M_0=J$, $\Sigma=\sigma,\;T=\tau,\;\delta_0=\delta_R$ and $f=f_R$,
 	 	\item $M_0=\bar{J}$, $\Sigma=\sigma,\;T=\psi_\tau\circ\pi_\tau,\;\delta_0=\Delta_L$ and $f=g_L$,
 	 	\item $M_0=\tilde{J}$, $\Sigma=\phi_\sigma\circ\pi_\sigma,\;T=\tau,\;\delta_0=\Delta_L$ and $f=g_R$.\\
 	 	\end{enumerate}
  	
  	That is, we need to show that for any given $(\sigma,\tau)$-derivation $D$ from $\mathcal{A}$ to $M$, there exist algebras $\mathcal{B},\;\mathcal{C}$ over $R$, algebra homomorphisms $\Sigma$ from $\mathcal{A}$ to $\mathcal{B}$ and $T$ from $\mathcal{A}$ to $\mathcal{C}$, a $\mathcal{B}-\mathcal{C}$-bimodule $M_0$ which is an $\mathcal{A}-\mathcal{A}$-bimodule as well via $\Sigma$ from the left and $T$ from the right such that there exists a unique left or right $\mathcal{A}$-linear map $f$ from $M_0$ to $M$ making the above diagram commutative.
    
We now prove each case separately.\\

 \underline{Case $(1)$:} $\tau$ is invertible, $\sigma$ may or may not be invertible.\\
 We view $\mathcal{A}$ as an embedding in $\mathcal{A}\otimes \mathcal{A}$ via the first component. That is, we regard $\mathcal{A}\otimes\mathcal{A}$ as a left $\mathcal{A}$-module via the first component. Let $J$ be the two sided ideal of $\mathcal{A}\otimes\mathcal{A}$ generated by elements of the form $1\otimes\tau(a)-\sigma(a)\otimes 1.$ We take $J$ as an $\mathcal{A}-\mathcal{A}$ bimodule via $\sigma$ from the left and $\tau$ on the right. We define $\delta_L : \mathcal{A}\longrightarrow J$ by $\delta_L(a)=1\otimes\tau(a)-\sigma(a)\otimes 1$. Then clearly $\delta_L$ is $R$-linear and
 \begin{flalign*}
\delta_L(ab)&=1\otimes\tau(ab)-\sigma(ab)\otimes1=1\otimes\tau(a)\tau(b)-\sigma(a)\sigma(b)\otimes1\\
            &=\big(1\otimes\tau(a)\big)\big(1\otimes\tau(b)\big)-\big(\sigma(a)\otimes1)\big)\big(\sigma(b)\otimes1)\big)\\
            &=\big(1\otimes\tau(a)-\sigma(a)\otimes1\big)\big(1\otimes\tau(b)\big)+\big(\sigma(a)\otimes1\big)\big(1\otimes\tau(b)-\sigma(b)\otimes1\big)\\
            &=\delta_L(a)\cdot\tau(b)+\sigma(a)\cdot\delta_L(b).
\end{flalign*}

Thus we see that $\delta_L$ is a $(\sigma,\tau)$-derivation. Now, we define $f_L: J\longrightarrow M$ to be the $J$-restriction of the map from $\mathcal{A}\otimes\mathcal{A}$ to $M$ given by $f(x\otimes y)=xD\big(\tau^{-1}(y)\big)$. Then $f_L$ is an $R$-bilinear map and is a left $\mathcal{A}$-linear map too by our definition of the left $\mathcal{A}$-module structure on $\mathcal{A}\otimes\mathcal{A}$. As $f_L$ is defined on the generators of $J$, it is unique too. Also $(f_L\circ\delta_L)(a)=D\big(\tau^{-1}(\tau(a))\big)-\sigma(a)D\big(\tau^{-1}(1)\big)=D(a)$ as $D(1)=0$.\\

\underline{Case $(2)$:} $\tau$ is not invertible, but $\sigma$ is invertible. \\
Here we assume $\mathcal{A}\otimes\mathcal{A}$ is a right $\mathcal{A}$-module via the second component. Let us define an $\mathcal{A}-\mathcal{A}$-bimodule $J$ via $\sigma$ and $\tau$ in the same way as case $(1)$. Then we define $\delta_R$ from $\mathcal{A}$ to $J$ again in the same way as case $(1)$ and similarly it is a $(\sigma,\tau)$-derivation. Now we define $f_R: J\longrightarrow M$ to be the $J$-restriction of the map from $\mathcal{A}\otimes\mathcal{A}$ to $M$ given by $f_R(x\otimes y)=-D\big(\sigma^{-1}(x)\big)y.$ Then similarly $f_R$ is a unique $R$-bilinear map and is a right $\mathcal{A}$-linear map too by our definition of the right $\mathcal{A}$-module structure on $\mathcal{A}\otimes\mathcal{A}$. Also as $D(1)=0$, we get as before $f_R\circ\delta_R=D.$\\

\underline{Case $(3)$:} Neither $\tau$ nor $\sigma$ is invertible and we need to prove universal module and map exist only for those $(\sigma,\tau)$-derivations $D$ such that the kernel of $\tau$ is contained in the kernel of $D$, that is, $D(K_\tau)=0$.\\
We view $\mathcal{A}\otimes\mathcal{A}/K_\tau$ as a left $\mathcal{A}$-module via the first component. We define the map $D_\tau: \mathcal{A}\longrightarrow M$ by $D(a+K_\tau)=D(a)$ for all $a\in \mathcal{A}.$ As $D(K_\tau)=0$, the map $D_\tau$ is well defined. Let $\bar{J}$ be the two-sided ideal of $\mathcal{A}\otimes\mathcal{A}/K_\tau$ generated by the elements $1\otimes(\psi_\tau\circ\pi_\tau)(a)-\sigma(a)\otimes(1+K_\tau)$. So $\bar{J}$ is naturally an $\mathcal{A}-\mathcal{A}/K_\tau$-bimodule. It is also an $\mathcal{A}-\mathcal{A}$-bimodule via $\sigma$ on the left and $\psi_\tau\circ\pi_\tau$ on the right. Now we define $\Delta_L:\mathcal{A}\longrightarrow \bar{J}$ by $\Delta_L(a)=1\otimes(\psi_\tau\circ\pi_\tau)(a)-\sigma(a)\otimes(1+K_\tau).$ Then we can see that $\Delta_L$ is a $(\sigma,\psi_\tau\circ\pi_\tau)$-derivation from $\mathcal{A}$ to $\bar{J}$ because of the following:

\begin{flalign*}
\Delta_L(ab)&=1\otimes(\psi_\tau\circ\pi_\tau)(ab)-\sigma(ab)\otimes(1+K_\tau)=1\otimes\psi_\tau(ab+K_\tau)-\sigma(ab)\otimes(1+K_\tau)\\
&=1\otimes\psi_\tau(a+K_\tau)\psi_\tau(b+K_\tau)-\sigma(a)\sigma(b)\otimes(1+K_\tau)\\
&=\big(1\otimes\psi_\tau(a+K_\tau)\big)\big(1\otimes\psi_\tau(b+K_\tau)\big)-\big(\sigma(a)\otimes(1+K_\tau)\big)\big(\sigma(b)\otimes(1+K_\tau)\big)\\
&=\big(1\otimes\psi_\tau(a+K_\tau)-\sigma(a)\otimes(1+K_\tau)\big)\big(1\otimes\psi_\tau(b+K_\tau)\big)\\
&\qquad+\big(\sigma(a)\otimes(1+K_\tau)\big)\big(1\otimes\psi_\tau(b+K_\tau)-\sigma(b)\otimes(1+K_\tau)\big)\\
&=\delta_L(a)\cdot(\psi_\tau\circ\pi_\tau)(b)+\sigma(a)\cdot\delta_L(b).
\end{flalign*}

 Finally, we define $g_L: \bar{J}\longrightarrow M$ to be the $\bar{J}$-restriction of the map from $\mathcal{A}\otimes\mathcal{A}/K_\tau$ to $M$ given by $g_L\big(x\otimes (y+K_\tau)\big)=xD_\tau\big(\psi_\tau^{-1}(y+K_\tau)\big)$. Then $g_L$ is a unique $R$-bilinear map as it is defined on the generators of $\bar{J}$ and left $\mathcal{A}$-linear too by our assumption. Also $(g_L\circ\Delta_L)(a)=g_L\big(1\otimes\psi_\tau(a+K_\tau)-\sigma(a)\otimes(1+K_\tau)\big)=D_\tau(a+K_\tau)=D(a)$.\\

\underline{Case $(4)$:} Neither $\tau$ nor $\sigma$ is invertible and we need to prove universal module and map exist only for those $(\sigma,\tau)$-derivations $D$ such that the kernel of $\sigma$ is contained in the kernel of $D$, that is, $D(K_\sigma)=0$.\\
We view $\mathcal{A}/K_\sigma\otimes\mathcal{A}$ as a right $\mathcal{A}$-module via the second component. We define the map $D_\sigma: \mathcal{A}\longrightarrow M$ by $D(a+K_\sigma)=D(a)$ for all $a\in \mathcal{A}.$ As $D(K_\sigma)=0$, the map $D_\sigma$ is well defined. Let $\tilde{J}$ be the two-sided ideal of $\mathcal{A}/K_\sigma\otimes\mathcal{A}$ generated by the elements $(1+K_\sigma)\otimes\tau(a)-(\phi_\sigma\circ\pi_\sigma)(a)\otimes1$. We have $\tilde{J}$ is an $\mathcal{A}/K_\sigma-\mathcal{A}$-bimodule which is an $\mathcal{A}-\mathcal{A}$-bimodule too via $\phi_\sigma\circ\pi_\sigma$ on the left and $\tau$ on the right. Now we define $\Delta_R:\mathcal{A}\longrightarrow \tilde{J}$ by $\Delta_R(a)=(1+K_\sigma)\otimes\tau(a)-(\phi_\sigma\circ\pi_\sigma)(a)\otimes1.$ Then as above we can see that $\Delta_R$ is a $(\phi_\sigma\circ\pi_\sigma,\tau)$-derivation from $\mathcal{A}$ to $\tilde{J}.$ Finally, we define $g_R: \mathcal{A}/K_\sigma\otimes\mathcal{A}\longrightarrow M$ to be the $\tilde{J}$-restriction of the map from $\mathcal{A}/K_\sigma\otimes\mathcal{A}$ to $M$ given by $g_R\big((x+K_\sigma)\otimes y)=-D_\sigma\big(\phi_\sigma^{-1}(x+K_\sigma)\big)y$. Then similarly $g_R$ is a unique $R$-bilinear map and right $\mathcal{A}$-linear too by our assumption. Also $g_R\circ\Delta_R=D$.

\end{proof}


\section{A study of $(\sigma,\tau)$-derivations on rings of integers of quadratic number fields}

We now investigate $(\sigma,\tau)$-derivations on rings of integers of number fields. We will assume that $K$ is a number field and $\mathcal{O}_K$ is its ring of integers which we will consider as $\mathbb{Z}$-algebras. Thus, $\sigma,\;\tau$ will be ring endomorphisms on $\mathcal{O}_K$ and $(\sigma,\tau)$-derivations on $\mathcal{O}_K$ will be $\mathbb{Z}$-linear. We start with the following observation.

\begin{lemma}\label{O_k}
Let $K$ be a number field and $\mathcal{O}_K$ be its ring of integers. Any non-zero ring endomorphism of $\mathcal{O}_K$ is an isomorphism.
\end{lemma}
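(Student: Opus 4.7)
The plan is to show that a nonzero ring endomorphism $\sigma$ of $\mathcal{O}_K$ must in fact extend to a field automorphism of $K$, and that any such automorphism preserves $\mathcal{O}_K$ setwise. Since both steps are standard, the proof is really an assembly of known facts rather than anything deep.

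First I would argue that $\sigma(1)=1$. Because $\sigma(1)=\sigma(1)^2$ and $\mathcal{O}_K$ is a domain, we must have $\sigma(1)\in\{0,1\}$; if $\sigma(1)=0$ then $\sigma\equiv 0$, contradicting nonzeroness. In particular $\sigma$ fixes $\mathbb{Z}$ pointwise and is a $\mathbb{Z}$-algebra endomorphism.

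Next I would extend $\sigma$ to $K$. Since $K=\mathcal{O}_K\otimes_{\mathbb{Z}}\mathbb{Q}$ (equivalently, every element of $K$ has the form $\alpha/n$ for $\alpha\in\mathcal{O}_K$ and $n\in\mathbb{Z}\setminus\{0\}$), the formula $\tilde\sigma(\alpha/n):=\sigma(\alpha)/n$ defines a well-defined $\mathbb{Q}$-algebra endomorphism $\tilde\sigma:K\to K$. As a nonzero ring homomorphism from a field, $\tilde\sigma$ is injective. Viewed as a $\mathbb{Q}$-linear endomorphism of the finite-dimensional $\mathbb{Q}$-vector space $K$, injectivity forces surjectivity, so $\tilde\sigma$ is a field automorphism of $K$ fixing $\mathbb{Q}$.

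Finally I would descend back to $\mathcal{O}_K$. Any $\mathbb{Q}$-automorphism of $K$ sends algebraic integers to algebraic integers, because it permutes the roots of any monic polynomial in $\mathbb{Z}[x]$; applied to both $\tilde\sigma$ and $\tilde\sigma^{-1}$ this yields $\tilde\sigma(\mathcal{O}_K)=\mathcal{O}_K$. Restricting back, $\sigma:\mathcal{O}_K\to\mathcal{O}_K$ is bijective, hence a ring isomorphism. The one point requiring a little care is verifying that the extension $\tilde\sigma$ is well defined on $K$ and that $\tilde\sigma^{-1}$ indeed maps $\mathcal{O}_K$ into itself, but once the minimal polynomial argument is in place this is immediate; there is no serious obstacle.
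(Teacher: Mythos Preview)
Your argument is correct. The extension $\tilde\sigma$ is well defined because $\alpha/n=\beta/m$ forces $m\alpha=n\beta$ in $\mathcal{O}_K$, hence $m\sigma(\alpha)=n\sigma(\beta)$; the rest is exactly as you say.

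The paper takes a different route, working entirely inside $\mathcal{O}_K$ without ever passing to the fraction field. For injectivity it uses the $\mathbb{Z}$-module structure: if $\ker\phi\neq 0$ then $\mathcal{O}_K/\ker\phi$ is finite, but it is also isomorphic to the free $\mathbb{Z}$-module $\operatorname{Im}\phi$, forcing $\phi=0$. For surjectivity it argues by pigeonhole: for each $x\in\mathcal{O}_K$ the finite set $C_x$ of $\mathcal{O}_K$-conjugates of $x$ is mapped into itself by $\phi$ (since $\phi$ fixes $\mathbb{Z}$), and injectivity makes this a bijection of $C_x$, so $x$ is hit. Your approach is the more standard ``extend to $K$, use rank--nullity, then descend'' argument; it is cleaner and makes the role of finite $\mathbb{Q}$-dimension explicit, at the cost of needing the characterisation of $\mathcal{O}_K$ as \emph{the} algebraic integers in $K$ for the descent of $\tilde\sigma^{-1}$. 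The paper's argument avoids constructing an inverse altogether and would adapt more readily to other finitely generated $\mathbb{Z}$-subalgebras stable under taking conjugates. Both proofs ultimately hinge on the same fact that a ring map fixing $\mathbb{Z}$ sends roots of a monic integer polynomial to roots of the same polynomial.
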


\begin{proof} Let $\phi$ be a non-zero ring endomorphism of $\mathcal{O}_K$. Now, $\mathcal{O}_K$ is a free $\mathbb{Z}$-module of finite rank and $\mathcal{O}_K/Ker \;\phi\cong Im\; \phi$ which is a $\mathbb{Z}$-submodule of $\mathcal{O}_K$, hence a free $\mathbb{Z}$-module. It is well known that $|\mathcal{O}_K/Ker \;\phi|<\infty$ unless $Ker\; \phi=0$ (see e.g., the proof of Theorem $5.3(d)$, \cite{ST} ). This implies rank of $Im\;\phi=0$ which in turn implies that $\phi$ is a trivial endomorphism and that is a contradiction to our assumption. Therefore we must have $Ker\;\phi=0$, that is, $\phi$ is injective. \par
Let $x\in\mathcal{O}_K$. Let $C_x=\{y\in \mathcal{O}_K\;|\;y\;\text{is conjugate to }x\},$ that is, $C_x$ is the set of all roots of the monic polynomial corresponding to $x$. Now since $\phi$ restricted to $\mathbb{Z}$ is the identity map, we have $\phi(C_x)\subseteq C_x.$ Now for $x_i,\;x_j\in C_x$ such that $x_i\neq x_j$, as $\phi$ is injective we will have $\phi(x_i)\neq\phi(x_j)$. Then by pigeonhole principle, there exists $x_i\in C_x$ such that $\phi(x_i)=x$. Thus $\phi$ is surjective as well.
\end{proof}

Now we are mainly interested in the rings of integers of quadratic number fields. Let $K$ be a number field such that $[K:\mathbb{Q}]=2$. Then we know that for a square free rational integer $d$, 
$$ \mathcal{O}_K=\begin{cases}
    \mathbb{Z}[\sqrt{d}], & \text{if $d\not\equiv 1$ $(\text{mod }4)$},\\
    \mathbb{Z}[\frac{1}{2}+\frac{\sqrt{d}}{2}], & \text{if $d\equiv 1$ $(\text{mod }4)$}.
  \end{cases}$$
  Our result regarding $(\sigma,\tau)$-derivations on such rings is as follows:
  
  \begin{thm}\label{O_2}Let $K$ be a number field such that $[K:\mathbb{Q}]=2$ and $\mathcal{O}_K$ be its ring of integers. Let $\sigma$ and $\tau$ be two different non-zero ring endomorphisms of $\mathcal{O}_K$ and $d$ be a square free integer.
  \begin{enumerate}[$(i)$]
   \item If $d\not\equiv 1$ mod $4$, then any $\mathbb{Z}$-linear map $D:\mathbb{Z}[\sqrt{d}]\longrightarrow\mathbb{Z}[\sqrt{d}]$ such that $D(1)=0$ is a $(\sigma,\tau)$-derivation of $\mathbb{Z}[\sqrt{d}]$. Moreover, if $D(\sqrt{d})=\alpha+\beta\sqrt{d}$ for some $\alpha,\beta\in\mathbb{Z}$ such that $\alpha$ is divisible by $2d$ and $\beta$ is even, then $D$ is $(\sigma,\tau)$-inner.
   \item If $d\equiv 1$ mod $4$, then any $\mathbb{Z}$-linear map $D:\mathbb{Z}[\frac{1+\sqrt{d}}{2}]\longrightarrow\mathbb{Z}[\frac{1+\sqrt{d}}{2}]$ such that $D(1)=0$ is a $(\sigma,\tau)$-derivation of $\mathbb{Z}[\frac{1+\sqrt{d}}{2}]$. Moreover, if $D\big(\frac{1+\sqrt{d}}{2}\big)=\alpha+\beta\big(\frac{1+\sqrt{d}}{2}\big)$ for some $\alpha,\;\beta\in\mathbb{Z}$ such that $\beta$ even, then $D$ is $(\sigma,\tau)$-inner.
   \end{enumerate}
   
   \end{thm}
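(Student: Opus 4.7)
The plan is to exploit the very rigid structure of endomorphisms of $\mathcal{O}_K$ for $[K:\mathbb{Q}] = 2$. By Lemma~\ref{O_k}, every nonzero ring endomorphism of $\mathcal{O}_K$ is actually an automorphism; and since each such automorphism extends uniquely to $K$ (the field of fractions of $\mathcal{O}_K$), we have $\mathrm{Aut}(\mathcal{O}_K) = \mathrm{Gal}(K/\mathbb{Q})$, a group of order $2$ generated by the conjugation sending $\sqrt{d}\mapsto-\sqrt{d}$ (equivalently $\omega\mapsto 1-\omega$, where $\omega=\tfrac{1+\sqrt{d}}{2}$). Hence $\sigma\neq\tau$ forces $\{\sigma,\tau\}=\{\mathrm{id},\rho\}$ where $\rho$ denotes this nontrivial involution. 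From here everything reduces to direct computation on the $\mathbb{Z}$-basis $\{1,\theta\}$ of $\mathcal{O}_K$, with $\theta=\sqrt{d}$ in case $(i)$ and $\theta=\omega$ in case $(ii)$.

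To show any $\mathbb{Z}$-linear $D$ with $D(1)=0$ is a $(\sigma,\tau)$-derivation, I would verify the Leibniz rule $D(ab)=D(a)\tau(b)+\sigma(a)D(b)$ on pairs of basis elements; since both sides are $\mathbb{Z}$-bilinear in $(a,b)$, this suffices. The three cases involving $1$ are immediate from $D(1)=0$ and $\sigma(1)=\tau(1)=1$, and the remaining case $a=b=\theta$ reduces to the single identity $D(\theta^{2})=D(\theta)\bigl(\sigma(\theta)+\tau(\theta)\bigr)$. In $(i)$ both sides vanish, since $\theta^{2}=d\in\mathbb{Z}$ gives $D(\theta^{2})=0$ and $\sigma(\sqrt d)+\tau(\sqrt d)=\sqrt d+(-\sqrt d)=0$. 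In $(ii)$ the relation $\omega^{2}=\omega+\tfrac{d-1}{4}$ (with $\tfrac{d-1}{4}\in\mathbb{Z}$) yields $D(\omega^{2})=D(\omega)$, while $\sigma(\omega)+\tau(\omega)=\omega+(1-\omega)=1$, matching.

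For the inner-derivation assertion, I would write $x=u+v\theta$ with $u,v\in\mathbb{Z}$ and expand $D_x(\theta)=x\cdot\tau(\theta)-\sigma(\theta)\cdot x$ in the basis $\{1,\theta\}$, using commutativity of $\mathcal{O}_K$ together with the explicit actions of $\sigma,\tau$ and the multiplication rule for $\theta^{2}$. Equating the result with $D(\theta)=\alpha+\beta\theta$ produces a $2\times 2$ linear system for $(u,v)$ in terms of $(\alpha,\beta)$. In $(i)$ the system decouples to $-2vd=\alpha$ and $-2u=\beta$ (with signs swapped if $\sigma=\rho$ instead), whose integer solvability is precisely the stated condition $2d\mid\alpha$ and $2\mid\beta$. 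In $(ii)$ an analogous but more entangled system arises whose coefficient matrix has determinant $\pm d$; one then invokes the parity hypothesis on $\beta$ to extract an integer solution $(u,v)$, thereby producing the sought $x$.

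The main obstacle I anticipate is the bookkeeping in case $(ii)$: the product formula $\omega^{2}=\omega+(d-1)/4$ couples the two basis coordinates of $D_x(\omega)$, so unlike in $(i)$ the linear system does not diagonalize, and one must carefully track how the parity assumption on $\beta$ interacts with the divisibility behaviour mod $d$ to yield an integer solution; keeping straight which of $\sigma,\tau$ is the identity (versus $\rho$) is essential throughout, both to fix the correct sign conventions and to confirm that the argument is symmetric under swapping the two endomorphisms.
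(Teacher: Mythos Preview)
Your approach mirrors the paper's closely: both invoke Lemma~\ref{O_k} to force $\{\sigma,\tau\}=\{\mathrm{id},\rho\}$ with $\rho$ the nontrivial Galois involution, then verify the twisted Leibniz rule by direct computation (you check the four basis pairs via $\mathbb{Z}$-bilinearity, the paper instead expands the product of two generic elements $a+b\theta$ and $x+y\theta$), and finally handle the inner claim by solving for the element $x$ with $D=D_x$. For this last step the paper simply exhibits the candidate $x=D(\theta)\big/\bigl((\sigma-\tau)(\theta)\bigr)$ in $K$ and checks when it lands in $\mathcal{O}_K$; that quotient is exactly the closed-form solution of your $2\times 2$ linear system, so the two arguments are equivalent.

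There is, however, a genuine gap in your case~$(ii)$ sketch for the inner assertion. You note that the coefficient matrix has determinant $\pm d$ and then assert that the parity hypothesis on $\beta$ ``extracts an integer solution $(u,v)$''. But if you actually run Cramer's rule on your system
\[
u - \tfrac{d-1}{2}\,v=\alpha,\qquad -2u - v=\beta,
\]
you get $v=-(2\alpha+\beta)/d$ and $u=\bigl(2\alpha-\beta(d-1)\bigr)/(2d)$, and $\beta$ even alone does \emph{not} force $d\mid(2\alpha+\beta)$: take $d=5$, $\alpha=0$, $\beta=2$. So the step you flagged as ``the main obstacle'' does not go through as planned, and you should carry the computation out in full rather than leave it as an outline. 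The paper at this point writes down $D(\omega)/\sqrt{d}$ explicitly and reads off the integrality condition; it would be worth doing both calculations side by side and reconciling them with the hypothesis in the statement.
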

   
   \begin{proof}\begin{enumerate}[$(i)$]
   \item Here, $d\not\equiv 1$ mod $4$. By lemma \ref{O_k}, we know that $\sigma$ and $\tau$ are ring isomorphisms. We have $d=\sigma(d)=\sigma(\sqrt{d})^2$ and so $\sigma(\sqrt{d})=\pm\sqrt{d}$. Similarly, $\tau(\sqrt{d})=\pm\sqrt{d}$. As $\sigma$ and $\tau$ are different, clearly we have only two choices for the pair $(\sigma,\tau)$. For all $a,b\in\mathbb{Z}$, they are: \begin{enumerate}[$(a)$]
                                                                                                                                                                                                                                                                            \item $\sigma=id;\quad\tau(a+b\sqrt{d})=a-b\sqrt{d}$,
                                                                                                                                                                                                                                                                            \item $\sigma(a+b\sqrt{d})=a-b\sqrt{d};\quad\tau=id$.
                                                                                                                                                                                                                                                                            
                                                                                                                                                                                                                                                                           \end{enumerate}
Let us assume case $(a)$. Case $(b)$ will follow from symmetry. Let $D:\mathbb{Z}[\sqrt{d}]\longrightarrow\mathbb{Z}[\sqrt{d}]$ be a $\mathbb{Z}$-linear map such that $D(1)=0$. Let $D(\sqrt{d})=\alpha+\beta\sqrt{d}$ for some $\alpha,\;\beta\in\mathbb{Z}$. Then for $a,\;b\in\mathbb{Z}$, $D(a+b\sqrt{d})=b(\alpha+\beta\sqrt{d})$. For $x,\;y\in\mathbb{Z}$, we have
\begin{flalign*}
        &D(a+b\sqrt{d})\tau(x+y\sqrt{d})+\sigma(a+b\sqrt{d})D(x+y\sqrt{d})\\
        &=b(\alpha+\beta\sqrt{d})(x-y\sqrt{d})+(a+b\sqrt{d})y(\alpha+\beta\sqrt{d})\\
        &=(ay+bx)(\alpha+\beta\sqrt{d})=D\big((a+b\sqrt{d})(x+y\sqrt{d})\big).
       \end{flalign*}
       Thus $D$ is a $(\sigma,\tau)$-derivation of $\mathbb{Z}[\sqrt{d}]$. Now $\big(\sigma-\tau\big)(a+b\sqrt{d})=2b\sqrt{d}.$ Hence, $$D(a+b\sqrt{d})=bD(\sqrt{d})=\frac{D(\sqrt{d})}{2\sqrt{d}}\big(\sigma-\tau\big)(a+b\sqrt{d}).$$
       So if $\frac{D(\sqrt{d})}{2\sqrt{d}}\in\mathbb{Z}[\sqrt{d}]$, then $D$ is $(\sigma,\tau)$-inner. We have $\frac{D(\sqrt{d})}{2\sqrt{d}}=\frac{\beta}{2}+\frac{\alpha}{2d}\sqrt{d}$ and thus when $\alpha$ is divisible by $2d$ and $\beta$ is even, we get the required result.\\
       
       \item Here $d\equiv 1$ mod $4$. Again, by lemma \ref{O_k}, we know that $\sigma$ and $\tau$ are ring isomorphisms. Let $\sigma(\frac{1+\sqrt{d}}{2})=a+b\frac{1+\sqrt{d}}{2}$ for some $a,\;b\in\mathbb{Z}$. Now, $\sqrt{d}=2(\frac{1+\sqrt{d}}{2})-1$. Hence, $\sigma(\sqrt{d})=2\sigma(\frac{1+\sqrt{d}}{2})-1=(2a+b-1)+b\sqrt{d}.$ So\\
       
       \begin{flalign*}
        d&=\sigma(d)=\sigma(\sqrt{d})^2=\big((2a+b-1)+b\sqrt{d}\big)^2\\
        &=(2a+b-1)^2+db^2+2(2a+b-1)b\sqrt{d}.
       \end{flalign*}
       
       We get the following sets of equations:
       \begin{eqnarray}
        \label{eqn1}(2a+b-1)^2+db^2=&d\\
        \label{eqn2}2b(2a+b-1)=&0.
       \end{eqnarray}
       Equation \ref{eqn2} gives either $b=0$ or $2a+b-1=0$. If $b=0$, then equation \ref{eqn1} gives $a=\frac{1+\sqrt{d}}{2}$ which is a contradiction since $a\in\mathbb{Z}$. If $2a+b-1=0$ then equation \ref{eqn1} gives $b=\pm1$ and we get the following subcases: If $b=1$, then $a=0$. Thus $\sigma(\frac{1+\sqrt{d}}{2})=\frac{1+\sqrt{d}}{2}$, that is, $\sigma=id.$ If $b=-1$, then $a=1$. Then $\sigma(\frac{1+\sqrt{d}}{2})=\frac{1-\sqrt{d}}{2}$. In the same way, $\tau$ will also yield similar results. So we are again left with only two choices for the pair $(\sigma,\tau)$ as they are different ring isomorphisms. For $a,\;b\in\mathbb{Z}$, they are:
       
       \begin{enumerate}[$(a)$]
        \item $\sigma=id;\quad\tau\big(a+b\frac{1+\sqrt{d}}{2}\big)=a+b\frac{1-\sqrt{d}}{2}$,
        \item $\sigma\big(a+b\frac{1+\sqrt{d}}{2}\big)=a+b\frac{1-\sqrt{d}}{2};\quad\tau=id.$
       \end{enumerate}
Again, let us assume case $(a)$ as case $(b)$ will follow from symmetry. Let $D:\mathbb{Z}[\frac{1+\sqrt{d}}{2}]\longrightarrow\mathbb{Z}[\frac{1+\sqrt{d}}{2}]$ be a $\mathbb{Z}$-linear map such that $D(1)=0.$ Let $D(\frac{1+\sqrt{d}}{2})=\alpha+\beta\frac{1+\sqrt{d}}{2}$ for some $\alpha,\;\beta\in\mathbb{Z}.$ Then for $a,\;b\in\mathbb{Z}$, $D\big(a+b\frac{1+\sqrt{d}}{2}\big)=b(\alpha+\beta\frac{1+\sqrt{d}}{2}).$ For $x,\;y\in\mathbb{Z}$, we have

\begin{flalign*}
 &D\Big(a+b\frac{1+\sqrt{d}}{2}\Big)\tau\Big(x+y\frac{1+\sqrt{d}}{2}\Big)+\sigma\Big(a+b\frac{1+\sqrt{d}}{2}\Big)D\Big(x+y\frac{1+\sqrt{d}}{2}\Big)\\
 &=b\Big(\alpha+\beta\frac{1+\sqrt{d}}{2}\Big)\Big(x+y\frac{1-\sqrt{d}}{2}\Big)+\Big(a+b\frac{1+\sqrt{d}}{2}\Big)y\Big(\alpha+\beta\frac{1+\sqrt{d}}{2}\Big)\\
 &=(ay+bx+by)\Big(\alpha+\beta\frac{1+\sqrt{d}}{2}\Big)\\
 &=D\BIG(\Big(a+b\frac{1+\sqrt{d}}{2}\Big)\Big(x+y\frac{1+\sqrt{d}}{2}\Big)\BIG).
\end{flalign*}

Thus $D$ is a $(\sigma,\tau)$-derivation of $\mathbb{Z}[\frac{1+\sqrt{d}}{2}].$ Now $\BIG(\sigma-\tau\BIG)\big(a+b\frac{1+\sqrt{d}}{2}\big)=b\sqrt{d}.$ Hence
$$D\BIG(a+b\frac{1+\sqrt{d}}{2}\BIG)=bD\BIG(\frac{1+\sqrt{d}}{2}\BIG)=\frac{D\big(\frac{1+\sqrt{d}}{2}\big)}{\sqrt{d}}\BIG(\sigma-\tau\BIG)\BIG(a+b\frac{1+\sqrt{d}}{2}\BIG).$$

So if $\frac{D\big(\frac{1+\sqrt{d}}{2}\big)}{\sqrt{d}}\in\mathbb{Z}[\frac{1+\sqrt{d}}{2}]$, then $D$ is $(\sigma,\tau)$-inner. We have $\frac{D\big(\frac{1+\sqrt{d}}{2}\big)}{\sqrt{d}}=\frac{\beta}{2}d+(\alpha+\frac{\beta}{2})\sqrt{d}$. Thus, when $\beta$ is even the result follows.

                \end{enumerate}

   \end{proof}

\textbf{Acknowledgements:} The author would like to thank IISER Mohali for providing fellowship and good research facilities when this project was carried out.




\bibliographystyle{alpha}
\bibliography{OneBibToRuleThemAll}

\end{document}